\documentclass[11pt]{article}
\usepackage{amsmath, amsthm, amstext}
\usepackage{amssymb, array, amsfonts}
\usepackage[utf8]{inputenc}
\usepackage[english]{babel}
\usepackage{enumerate}
\usepackage{graphicx}
\usepackage{mathrsfs}

\usepackage{cite}

\inputencoding{utf8}

\newtheorem{thm}{Theorem}

\title{On a certain representation of a solution to the characteristic problem for the ultrahyperbolic equation}
\author{Maxim N. Demchenko\footnote{St.~Petersburg Department of
V.\,A.~Steklov Institute of Mathematics of
the Russian Academy of Sciences, 
27 Fontanka, St.~Petersburg, Russia. E-mail: demchenko@pdmi.ras.ru}}
\date{}

\begin{document}
\maketitle
\begin{flushright}
{\em\large
\begin{tabular}{r}
To the memory of Vassiliy Mikhaylovich Babich
\end{tabular}
}
\end{flushright}

\begin{abstract}
We consider the characteristic problem for the ultrahyperbolic equation in the Eucli\-dean space.
The value of a solution is prescribed on the characteristic hyperplane.
A well-posed set-up of the problem is discussed.
We obtain a certain representation for a solution
suitable for analysis of its asymptotics at the infinity.

\smallskip

\noindent \textbf{Keywords:} 
ultrahyperbolic equation, characteristic problem, asymptotics of a solution at the infinity.
\end{abstract}

\section{Introduction}
We consider the equation
\begin{equation}
  (\partial^2_{ts} + \partial_{x_1}^2 +\ldots + \partial_{x_d}^2 - \partial_{y_1}^2 - \ldots - \partial_{y_n}^2) v = 0,
  \label{eqn-psi}
\end{equation}
where $v$ is a function of variables
\begin{equation*}
  (t, s, x_1, \ldots, x_d, y_1, \ldots, y_n) \in {\mathbb R}\times {\mathbb R}\times{\mathbb R}^d\times{\mathbb R}^n,
  \quad
  d,n \geqslant 1,
\end{equation*}
subject to the following condition
\begin{equation}
  v(0, \cdot) = v_0.
  \label{cauchy}
\end{equation}
Equation~(\ref{eqn-psi}) is of ultrahyperbolic type,
and condition~(\ref{cauchy}) prescribes the value of a solution on the characteristic hyperplane $\{t=0\}$.

Problem~(\ref{eqn-psi}), (\ref{cauchy}) was shown to be well-posed in a certain class of functions
in~\cite{Blag}.
Also the following conservation law was established there
\begin{equation*}
  \|v(t, \cdot)\|_{L_2} = \|v_0\|_{L_2}, \quad t\in{{\mathbb R}}.
\end{equation*}
These two features show that the problem in consideration is close in its properties
to classical evolution problems for hyperbolic equations.
In particular, it seems to be possible to build an analogue of nonstationary scattering theory for such a problem
considering $t$ as a time parameter.
In this context, the issue of asymptotic behavior of solutions as $t\to\infty$ would play a significant role.
In the present paper, we obtain a representation for a solution to problem~(\ref{eqn-psi}), (\ref{cauchy}),
suitable for deduction of such an asymptotics (Theorem~\ref{thm-repr}).

In paper~\cite{Blag},
a formula for a solution to problem~(\ref{eqn-psi}), (\ref{cauchy}) was obtained in the form of a convolution
of the data $v_0$ with a certain distribution in ${\mathbb R}\times{\mathbb R}^d\times{\mathbb R}^n$ being an analogue
of a fundamental solution.
The latter is described in terms of an analytic continuation of a certain distribution
with respect to a complex parameter,
which complicates the application of this formula to the asymptotic analysis of the solution.

Our interest to the characteristic problem is motivated by the fact that,
in contrast to the Cauchy problem for ultrahyperbolic equations,
it allows a well-posed set-up in standard (non-analytic) classes of functions.
Besides the paper~\cite{Blag} mentioned above,
this issue was also studied in~\cite{Blag-char},
where the well-posedness of the problem with data on the characteristic cone was established.
Note also that equations of this type allow a set-up, in which a solution is subject to
asymptotic conditions at the infinity~\cite{DMN, DMN23, DMN-DD24, DMN24}.
In the case of hyperbolic equations, such kind of problems
were studied in~\cite{Blag-scatter, Moses, Kis, Plachenov}.

\section{The set-up and well-posedness of the problem} 
Introduce notation
\begin{gather*}
  \overline x = (x_1, \ldots, x_d) \in {\mathbb R}^d,
  \quad
  \overline y = y_1, \ldots, y_n \in {\mathbb R}^n,
  \quad
  N = d+n.
\end{gather*}

We will assume that a solution $v$ to problem~(\ref{eqn-psi}), (\ref{cauchy})
is a continuous function of variable $t\in{\mathbb R}$,
whose values are function of variables $(s, \overline x, \overline y)$ belonging to $L_2({\mathbb R}^{N+1})$.
This can be recorded as follows
\begin{equation}
  v \in C\left({\mathbb R}; L_2({\mathbb R}^{N+1})\right).
  \label{class}
\end{equation}
Such a function can be considered as a Lebesgue measurable scalar function of variables
$(t, s, \overline x, \overline y)$ in ${\mathbb R}^{N+2}$.
This follows from the fact that, for any $t\in{\mathbb R}$,
convolutions $\varepsilon^{-N-1} \chi_\varepsilon * v(t, \cdot)$
($\chi_\varepsilon$ is the characteristic function of the ball in ${\mathbb R}^{N+1}$ of radius $\varepsilon$ centered at the origin)
with respect to variables $s, \overline x, \overline y$
tend to $v(t, \cdot)$ as $\varepsilon\to 0$ at every Lebesgue point of this function from $L_2({\mathbb R}^{N+1})$.
Since these convolutions are continuous in ${\mathbb R}^{N+2}$,
the function $v$ is measurable being equal almost everywhere in ${\mathbb R}^{N+2}$ to the limit of a sequence of continuous functions.

Assumption~(\ref{class}) implies also that $v\in L_{2,{\rm loc}}({\mathbb R}^{N+2})$,
which means that this function is a regular distribution in ${\mathbb R}^{N+2}$.
Thus we may demand equation~(\ref{eqn-psi}) to be fulfilled in the sense of distributions in ${\mathbb R}^{N+2}$.
Condition~(\ref{cauchy}) with data $v_0\in L_2({\mathbb R}^{N+1})$ has the obvious meaning for functions from class~(\ref{class}).
In this section, we show that problem~(\ref{eqn-psi}), (\ref{cauchy}) in such a set-up has a unique solution.

\begin{thm}
  There exists a unique solution $v(t,s,\overline x, \overline y)$ from class~(\ref{class}) satisfying~(\ref{eqn-psi}), (\ref{cauchy})
  for a given function $v_0\in L_2({\mathbb R}^{N+1})$.
\end{thm}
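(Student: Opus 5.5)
Both existence and uniqueness follow from a partial Fourier transform in the variables $(s,\overline x,\overline y)$. Write $\sigma,\xi,\eta$ for the dual variables and $\hat v(t,\sigma,\xi,\eta)$ for the transform of $v(t,\cdot)$. Equation~(\ref{eqn-psi}) then becomes the first order ODE in $t$
\begin{equation*}
  i\sigma\,\partial_t \hat v = (|\xi|^2-|\eta|^2)\,\hat v ,
\end{equation*}
so the natural candidate is
\begin{equation*}
  \hat v(t,\sigma,\xi,\eta)=\exp\Bigl(-\frac{i t(|\xi|^2-|\eta|^2)}{\sigma}\Bigr)\hat v_0(\sigma,\xi,\eta).
\end{equation*}
This is defined for $\sigma\neq0$, i.e.\ almost everywhere. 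The multiplier has modulus one, so $v(t,\cdot)=\mathcal F^{-1}\hat v(t,\cdot)$ is a unitary group $U(t)$ applied to $v_0$. This gives the conservation law and the condition $v(0,\cdot)=v_0$.

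\textbf{Existence.} Strong continuity of $t\mapsto v(t,\cdot)$ in $L_2({\mathbb R}^{N+1})$ follows from Plancherel and dominated convergence: the multiplier converges pointwise a.e.\ as $t\to t_0$ and is bounded by $1$. This puts $v$ in class~(\ref{class}). To verify~(\ref{eqn-psi}) in $\mathcal D'({\mathbb R}^{N+2})$, take a test function $\varphi=\varphi(t,s,\overline x,\overline y)$. We need
\begin{equation*}
  \int_{\mathbb R}\bigl(v(t,\cdot),\,L^*\varphi(t,\cdot)\bigr)\,dt=0,
\end{equation*}
where $L^*$ is the formal adjoint (here $L^*=L$). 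Apply Plancherel at each fixed $t$ and write $Q=|\xi|^2-|\eta|^2$. Up to conjugation conventions, the integrand becomes $\int \hat v\,\overline{\bigl(i\sigma\partial_t-Q\bigr)\hat\varphi}\,d\sigma\,d\xi\,d\eta$. Since $\hat\varphi$ is Schwartz in all variables and smooth in $t$, and $\hat v$ is explicit, Fubini lets us integrate by parts in $t$ for a.e.\ fixed $(\sigma,\xi,\eta)$. The resulting integrand is $\overline{\hat\varphi}\,(i\sigma\partial_t-Q)\hat v=0$, because $\hat v$ solves the ODE pointwise for $\sigma\ne0$. The boundary terms vanish because $\varphi$ has compact support in $t$. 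Absolute integrability holds because $|\hat v|=|\hat v_0|\in L_2$ and $\sigma\hat\varphi$, $Q\hat\varphi$, $\sigma\partial_t\hat\varphi$ are in $L_2$ locally uniformly in $t$.

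\textbf{Uniqueness.} By linearity we may take $v_0=0$ and must show $v\equiv0$. Set $w(t,\cdot)=U(-t)v(t,\cdot)$, i.e.\ $\hat w=e^{itQ/\sigma}\hat v$. The plan is to show that $\hat w$ is independent of $t$ in the weak sense; continuity and $w(0)=0$ then give $w\equiv0$. I expect this to be the main obstacle, because the distributional equation only yields $i\sigma\partial_t\hat v=Q\hat v$ in a weak sense, and the factor $\sigma$ degenerates at $\sigma=0$.

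To handle it, test the equation with functions whose partial Fourier transforms are supported in $\{|\sigma|>\delta\}$. Concretely, take $\varphi$ with $\hat\varphi(t,\sigma,\xi,\eta)=\theta(t)\,e^{-itQ/\sigma}g(\sigma,\xi,\eta)$, where $\theta\in C_0^\infty({\mathbb R})$ and $g\in C_0^\infty$ with $\operatorname{supp}g\subset\{|\sigma|>\delta\}$. Such $\varphi$ is not compactly supported in $(s,\overline x,\overline y)$, but it is Schwartz there. Since $v\in C({\mathbb R};L_2)$, the defining identity extends by density to test functions that are compactly supported in $t$ and Schwartz in the other variables. With this choice, $(i\sigma\partial_t-Q)\hat\varphi=i\sigma\theta'(t)e^{-itQ/\sigma}g$ up to conventions, and the identity reduces to
\begin{equation*}
  \int\theta'(t)\,\bigl(\hat w(t),\sigma g\bigr)\,dt=0 .
\end{equation*}
The bracket is continuous in $t$ and $\theta$ is arbitrary, so $(\hat w(t),\sigma g)$ is constant in $t$, hence zero. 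The functions $\sigma g$ are dense in $L_2$, since $\{\sigma=0\}$ is a null set and $\delta>0$ is arbitrary. Therefore $\hat w(t)=0$ and $v=0$.
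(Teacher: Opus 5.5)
Your proof is correct. The existence half matches the paper's: the explicit unimodular multiplier, continuity by dominated convergence, and verification of the equation via Plancherel, Fubini and integration by parts in $t$ at each fixed frequency.

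For uniqueness you take a genuinely different route. The paper tests the equation only with product functions $\psi_1(t)\psi_2(s,\overline x,\overline y)$, and then needs three further steps:
\begin{enumerate}
\item a density lemma: functions $P\,F^{-1}\psi_2$ approximate any Schwartz function, with the weight $P=1+\lambda^2+\overline\xi^2+\overline\eta^2$ inserted so that the relevant $w$ lies in $L_2$. This gives the weak ODE in $t$ for a.e.\ frequency, for each fixed $\psi_1$;
\item a countable dense family in $C^1_T$, to make the exceptional null set independent of $\psi_1$;
\item solving that weak ODE to obtain the representation~(\ref{Cauchy-solve}), and then using continuity at $t=0$ to get $V=0$.
\end{enumerate}
You instead build the propagator into the test function, $\hat\varphi=\theta(t)e^{-itQ/\sigma}g$ with $g$ supported away from $\{\sigma=0\}$. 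The equation then collapses at once to $\frac{d}{dt}(\hat w(t),\sigma g)=0$ in $\mathcal D'({\mathbb R})$, where $\hat w(t)=e^{itQ/\sigma}\hat v(t)$ is a continuous $L_2$-valued function. The only extra ingredient is extending the weak formulation to test functions that are compactly supported in $t$ and Schwartz in the other variables. You justify this correctly: by a cutoff argument, since $v\in L_2([-T,T]\times{\mathbb R}^{N+1})$.

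What your route buys is the absence of any pointwise-a.e.\ bookkeeping in frequency. The degeneracy at $\sigma=0$ is handled simply by the density of $C_0^\infty(\{\sigma\neq0\})$ in $L_2$. What the paper's route buys is an explicit derivation of the representation~(\ref{Cauchy-solve}) for an arbitrary solution in the class, which it then uses as the starting point for existence. Your argument gives the same information if you do not first set $v_0=0$: it shows $\hat w(t)$ is constant, i.e.\ $v(t)=U(t)v(0)$.
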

\begin{proof}
  Define the Fourier transform of a function $f(s, \overline x, \overline y)$ as follows
\begin{equation*}
  (F f)(\lambda, \overline\xi, \overline\eta) = 2 \int_{{\mathbb R}^{N+1}} e^{-i(s\lambda + \overline x\,\overline\xi - \overline y\,\overline\eta)} f(s, \overline x, \overline y)\, ds d\overline x d\overline y.
\end{equation*}

We begin with the proof of uniqueness of a solution.
Let a function $v(t, s, \overline x, \overline y)$ be a solution to the problem with $v_0 = 0$.
Denote by $\tilde v(t, \lambda, \overline\xi, \overline\eta)$ its Fourier transform $F$
with respect to variables $s, \overline x, \overline y$.
Next, let
$\psi(t, s, \overline x, \overline y)$ be an arbitrary test function from $C_0^\infty({\mathbb R}^{N+2})$,
and $\Psi(t, \lambda, \overline\xi, \overline\eta)$ -- its inverse Fourier transform $F^{-1}$ with respect to variables $s, \overline x, \overline y$.
We have (here and further angle brackets denote a pairing of a distribution and a test function)
\begin{multline*}
  0 = \langle{}v, (\partial^2_{ts} + \Delta_{\overline x} - \Delta_{\overline y})\psi\rangle
  = \int_{\mathbb R} dt \int_{{\mathbb R}^{N+1}} (v (\partial^2_{ts} + \Delta_{\overline x} - \Delta_{\overline y})\psi)(t, s, \overline x, \overline y) \, ds d\overline x d\overline y
  \\= \int_{\mathbb R} dt \int_{{\mathbb R}^{N+1}} (\tilde v (-i\lambda \partial_t - \overline\xi^2 + \overline\eta^2)\Psi)(t, \lambda, \overline\xi, \overline\eta) \, d\lambda d\overline\xi d\overline\eta.
\end{multline*}
By Plancherel's theorem, the function $\tilde v$, as well as $v$, satisfies condition of the form~(\ref{class}),
hence it is also a Lebesgue measurable function in ${\mathbb R}^{N+2}$.
Besides, it follows from property~(\ref{class}) of $\tilde v$ that
the the integrand in the resulting expression is integrable in ${\mathbb R}^{N+2}$.
By applying Foubini's theorem, we obtain
\begin{equation*}
  \int_{{\mathbb R}^{N+1}} d\lambda d\overline\xi d\overline\eta \int_{\mathbb R} (\tilde v (-i\lambda \partial_t - \overline\xi^2 + \overline\eta^2)\Psi)(t, \lambda, \overline\xi, \overline\eta) \, dt = 0.
\end{equation*}
Now choose a test function of the form
\begin{equation*}
  \psi\left(t, s, \overline x, \overline y\right) = \psi_1(t) \psi_2\left(s, \overline x, \overline y\right),
\end{equation*}
where $\psi_1 \in C_0^\infty({\mathbb R})$, $\psi_2 \in C_0^\infty({\mathbb R}^{N+1})$.
Then the integral in the last identity takes the form
\begin{equation*}
  \int_{{\mathbb R}^{N+1}} d\lambda d\overline\xi d\overline\eta\, (F^{-1}\psi_2)\left(\lambda, \overline\xi, \overline\eta\right)
  \int_{\mathbb R} \tilde v(t, \lambda, \overline\xi, \overline\eta) (-i\lambda \partial_t - \overline\xi^2 + \overline\eta^2) \psi_1(t) \, dt,
\end{equation*}
and the identity itself can be recorded as follows
\begin{equation*}
  \int_{{\mathbb R}^{N+1}} ((G\psi_2) w)\left(\lambda, \overline\xi, \overline\eta\right)
  \, d\lambda d\overline\xi d\overline\eta = 0,
  \quad
  \psi_2 \in C_0^\infty({\mathbb R}^{N+1}).
\end{equation*}
Here the function $w(\lambda, \overline\xi, \overline\eta)$ and the operator $G$ are defined by the following relations
\begin{gather*}
  G\psi_2 = P F^{-1}\psi_2,\\
  w(\lambda, \overline\xi, \overline\eta) = \frac{1}{P\left(\lambda, \overline\xi, \overline\eta\right)}
  \int_{\mathbb R} \tilde v(t, \lambda, \overline\xi, \overline\eta) (-i\lambda \partial_t - \overline\xi^2 + \overline\eta^2) \psi_1(t) \, dt,
  \\
  P(\lambda, \overline\xi, \overline\eta) = 1 + \lambda^2 + \overline\xi^2 + \overline\eta^2.
\end{gather*}
The function $w$ belongs to $L_2({\mathbb R}^{N+1})$ by Cauchy-Schwartz inequality and in view of the fact that the function $\psi_1$ is compactly supported.
Now we extract the equality $w=0$ from this identity.
To achieve this, it is sufficient to prove that any function from $\varphi\in\mathcal{S}({\mathbb R}^{N+1})$
can be approximated in the topology of $\mathcal{S}({\mathbb R}^{N+1})$ by functions of the form
$G\psi_2$, where $\psi_2\in C_0^\infty({\mathbb R}^{N+1})$.
Let a function $\chi\left(s, \overline x, \overline y\right)$ belong to $C_0^\infty({\mathbb R}^{N+1})$
and be equal to unity at the origin.
For $\varepsilon>0$ we set
\begin{equation*}
  \chi^\varepsilon\left(s, \overline x, \overline y\right) = \chi\left(\varepsilon s, \varepsilon \overline x, \varepsilon \overline y\right),
  \quad
  \psi_2^\varepsilon = \chi^\varepsilon F(\varphi/P) \in C_0^\infty({\mathbb R}^{N+1}).
\end{equation*}
Then the functions $\psi_2^\varepsilon$ tend to the function $F(\varphi/P)$ in $\mathcal{S}({\mathbb R}^{N+1})$ as $\varepsilon\to 0$,
and so, 
$G\psi_2^\varepsilon = P F^{-1} \psi_2^\varepsilon$ tend to $\varphi$.

It follows that, for a fixed function $\psi_1$ and for a.e. $(\lambda, \overline\xi, \overline\eta)$, we have
\begin{equation}
  \int_{\mathbb R} \tilde v(t, \lambda, \overline\xi, \overline\eta) (-i\lambda \partial_t - \overline\xi^2 + \overline\eta^2) \psi_1(t) \, dt = 0.
  \label{vt-identity}
\end{equation}
Now let $\{\psi_1^k\}_{k=1}^\infty \subset C_0^\infty(-T, T)$ be a countable dense subset of the space
\begin{equation*}
  C^1_T
  = \{ \varphi \in C^1[-T,T]\,|\, \varphi(\pm T) = \varphi'(\pm T) = 0\}
\end{equation*}
for some $T>0$.
For $(\lambda, \overline\xi, \overline\eta)$ from a set of full measure in ${\mathbb R}^{N+1}$,
equality~(\ref{vt-identity}) holds with $\psi_1 = \psi_1^k$ for any $k\geqslant 1$.
Therefore, it holds for any $\psi_1 \in C^1_T$.
Hence, for some $V(\lambda, \overline\xi, \overline\eta)$ and a.e. $t\in[-T,T]$, we have
\begin{equation}
  \tilde v(t, \lambda, \overline\xi, \overline\eta) = e^{i t (\overline\eta^2 - \overline\xi^2)/\lambda}\, V(\lambda, \overline\xi, \overline\eta).
  \label{Cauchy-solve}
\end{equation}

Condition $\tilde v(t,\cdot) \in L_2({\mathbb R}^{N+1})$ implies that $V\in L_2({\mathbb R}^{N+1})$.
Then the right hand side of~(\ref{Cauchy-solve}) determines an element from class~(\ref{class})
satisfying $\tilde v(0, \cdot) = V$.
This follows from Lebesgue's Dominated Convergence Theorem.
Taking into account that $\tilde v(0, \cdot) = 0$, this implies $V = 0$,
which means that $\tilde v = 0$ in $[-T,T]\times{\mathbb R}^{N+1}$.
Since $T$ is arbitrary, we conclude that $v = 0$ in ${\mathbb R}^{N+2}$.

Now turn to the existence of a solution to problem~(\ref{eqn-psi}), (\ref{cauchy}).
Our argument will be based on equality~(\ref{Cauchy-solve}).
More specifically, we will show that, for a given $v_0 \in L_2({\mathbb R}^{N+1})$, the inverse Fourier transform $F^{-1}$ of the function
\begin{equation}
  \tilde v(t, \lambda, \overline\xi, \overline\eta) = e^{i t (\overline\eta^2 - \overline\xi^2)/\lambda} \, \tilde v_0(\lambda, \overline\xi, \overline\eta),
  \quad \tilde v_0 = F v_0,
  \label{v-sol}
\end{equation}
with respect to variables $\lambda, \overline\xi, \overline\eta$ is a solution to the problem.
As was already pointed out, such a function belongs to class~(\ref{class})
and satisfies equality $\tilde v(0, \cdot) = \tilde v_0$, i.e. condition~(\ref{cauchy}).

Next, let $\psi(t, s, \overline x, \overline y)$ be a test function from $\mathcal{S}({\mathbb R}^{N+2})$,
$\Psi(t, \lambda, \overline\xi, \overline\eta)$ be its inverse Fourier transform $F^{-1}$ with respect to variables $s, \overline x, \overline y$.
We have
\begin{multline*}
  \langle{}v, (\partial^2_{ts} + \Delta_{\overline x} - \Delta_{\overline y})\psi\rangle
  = \int_{\mathbb R} dt \int_{{\mathbb R}^{N+1}} (v (\partial^2_{ts} + \Delta_{\overline x} - \Delta_{\overline y})\psi)(t, s, \overline x, \overline y) \, ds d\overline x d\overline y
  \\= \int_{\mathbb R} dt \int_{{\mathbb R}^{N+1}} (\tilde v (-i\lambda \partial_t - \overline\xi^2 + \overline\eta^2)\Psi)(t, \lambda, \overline\xi, \overline\eta) \, d\lambda d\overline\xi d\overline\eta
  \\= \int_{{\mathbb R}^{N+1}} d\lambda d\overline\xi d\overline\eta\, \tilde v_0(\lambda, \overline\xi, \overline\eta) \int_{\mathbb R} e^{i t (\overline\eta^2 - \overline\xi^2)/\lambda} 
  (-i\lambda \partial_t - \overline\xi^2 + \overline\eta^2)\Psi(t, \lambda, \overline\xi, \overline\eta) \, dt = 0.
\end{multline*}
Therefore, equation~(\ref{eqn-psi}) is satisfied in the sense of distributions.
\end{proof}

\section{Full Fourier transform of the solution to problem~(\ref{eqn-psi}),~(\ref{cauchy})} 
Relation~(\ref{v-sol}) gives an expression for the Fourier transform of the solution to the problem with respect to variables $s, \overline x, \overline y$.
However, in the study of the asymptotics of the solution for large $t$,
a formula for the full Fourier transform of the solution is more preferable.
The latter can be derived from~(\ref{v-sol}).
A rigorous justification of this derivation is complicated by the fact that the resulting distribution in ${\mathbb R}^{N+2}$
is singular, and by the presence of a singularity in the exponent on the right hand side of~(\ref{v-sol}).

In order to find the Fourier transform of the solution, we turn to the function
\begin{equation}
  u(x,y) = v\left(t, s, \overline x, \overline y\right), 
  \label{uv}
\end{equation}
where
\begin{equation*}
  x = (x_0, \overline x) \in {\mathbb R}\times{\mathbb R}^d,
  \quad
  y = (y_0, \overline y) \in {\mathbb R}\times{\mathbb R}^n,
\end{equation*}
and variables $x_0, y_0$ are related to $t, s$ as follows
\begin{equation}
    x_0 = t+s, \quad y_0 = t-s.
    \label{xy-st}
\end{equation}
The function $u(x,y)$ satisfies equation
\begin{gather}
  (\Delta_x - \Delta_y) u = 0,
  \label{eq-u}
\end{gather}
where $\Delta_x = \partial_{x_0}^2 + \ldots + \partial_{x_d}^2$, $\Delta_y = \partial_{y_0}^2 + \ldots + \partial_{y_n}^2$.
We define its Fourier transform by the equality (understood in the sense of tempered distributions) 
\begin{equation*}
  \hat u(\xi, \eta) = \int_{{\mathbb R}^{N+2}} e^{-i (x\xi - y\eta)} u(x, y)\, dx dy.
\end{equation*}
Equation~(\ref{eq-u}) implies that ${\rm supp}\, \hat u \subset \overline{\mathcal{C}}$, where
\begin{equation*}
  \mathcal{C} = \left\{(\xi,\eta)\in ({\mathbb R}^{d+1}\times{\mathbb R}^{n+1})\setminus\{0\}\,|\, \xi^2 = \eta^2\right\}.
\end{equation*}
We will seek $\hat u(\xi,\eta)$ in the following form
\begin{equation*}
  \hat u(\xi, \eta) = \delta(\xi^2 - \eta^2)\, a(\xi/|\xi|, \eta/|\eta|, |\xi|),
\end{equation*}
where the amplitude $a(\zeta, \sigma, r)$ is defined on $\Sigma\times {\mathbb R}_+$, $\Sigma = S^d\times S^n$.
The right hand side of the last relation is understood as the tempered distribution
acting on a test function $\Phi(\xi,\eta)$ from the Schwartz class $\mathcal{S}({\mathbb R}^{N+2})$ by the rule
\begin{equation}
  \langle\hat u, \Phi\rangle = \frac{1}{2} \int_{\Sigma\times{\mathbb R}_+} r^{N-1} a(\zeta,\sigma,r) \Phi(r\zeta, r\sigma) \, d\zeta d\sigma dr.
  \label{u-Phi}
\end{equation}
Here $d\zeta$ and $d\sigma$ denote the surface measure on $S^d$ and $S^n$, respectively.
We choose the following function as an amplitude
\begin{equation}
  a(\xi, \eta) = 2\pi |\xi_0 + \eta_0|\, \tilde v_0(\xi_0 + \eta_0, \overline\xi, \overline\eta).
  \label{ampl}
\end{equation}
Note that definition~(\ref{u-Phi}) makes sense for such an amplitude, 
since the integrand is a continuous bounded function in $\Sigma\times{\mathbb R}_+$ with bounded support.

In the remainder of this section, we prove the following theorem.
\begin{thm}  \label{thm-repr}
  Let a function $v_0(s,\overline x, \overline y)$ belong to $\mathcal{S}({\mathbb R}^{N+1})$,
  and the distribution $u(x,y)$ be defined by equalities~(\ref{u-Phi}), (\ref{ampl}).
  Then the distribution $v(t,s,\overline x, \overline y)$ related to $u(x,y)$ by~(\ref{uv})
  belongs to class~(\ref{class}) and satisfies~(\ref{eqn-psi}), (\ref{cauchy}).
\end{thm}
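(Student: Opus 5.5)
The plan is to show directly that the distribution $v$ obtained from $u$ via~(\ref{uv}) coincides with the solution built in the existence part of the first theorem, namely the inverse Fourier transform in $(s,\overline x,\overline y)$ of~(\ref{v-sol}). By the uniqueness part of that theorem, it then suffices to identify $v$ with that function. So I will compute the pairing $\langle v,\psi\rangle$ for a test function $\psi\in\mathcal{S}({\mathbb R}^{N+2})$ in two ways and compare.

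First, I change variables. Under~(\ref{xy-st}) we have $x\xi - y\eta = t(\xi_0-\eta_0) + s(\xi_0+\eta_0) + \overline x\,\overline\xi - \overline y\,\overline\eta$. Hence, on the Fourier side, $t$ is dual to $\tau=\xi_0-\eta_0$ and $s$ is dual to $\lambda=\xi_0+\eta_0$. Pairing $u$ with $\psi$ thus reduces, by~(\ref{u-Phi}), to an integral of $a\,\Phi$ over the cone $\mathcal C$, where $\Phi$ is the appropriately normalized inverse Fourier transform of $\psi$ (with Jacobian factor $2$ from $dx_0dy_0 = 2\,dt\,ds$).

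Next, I parametrize $\mathcal C$ by $(\lambda,\overline\xi,\overline\eta)$ instead of $(\zeta,\sigma,r)$. On $\mathcal C$ we have $\xi_0^2-\eta_0^2 = \overline\eta^2-\overline\xi^2$, i.e. $\lambda\tau = \overline\eta^2-\overline\xi^2$. So for $\lambda\neq0$ the cone is the graph $\tau = (\overline\eta^2-\overline\xi^2)/\lambda$ over $(\lambda,\overline\xi,\overline\eta)$, and the set $\lambda=0$ has measure zero on the cone. The measure $\delta(\xi^2-\eta^2)$ in these coordinates is $d\lambda\,d\overline\xi\,d\overline\eta\,d\tau\,\delta(\lambda\tau-(\overline\eta^2-\overline\xi^2))/2$ up to the constant of the linear change $(\xi_0,\eta_0)\mapsto(\lambda,\tau)$. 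This yields the density $1/(c|\lambda|)$ on the graph. The factor $|\xi_0+\eta_0|=|\lambda|$ in~(\ref{ampl}) is designed to cancel exactly this, and the $2\pi$ normalizes the one-dimensional Fourier inversion in $\tau\leftrightarrow t$. Checking the constants ($2\pi$, the $\tfrac12$ in~(\ref{u-Phi}), the factor $2$ in $F$ and in the Jacobian) is routine bookkeeping.

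After this substitution, $\langle v,\psi\rangle$ becomes $\int \tilde v_0(\lambda,\overline\xi,\overline\eta)\int_{\mathbb R} e^{it(\overline\eta^2-\overline\xi^2)/\lambda}\Psi(t,\lambda,\overline\xi,\overline\eta)\,dt\,d\lambda\,d\overline\xi\,d\overline\eta$. This is exactly the pairing of~(\ref{v-sol}), transformed back, with $\psi$. The inner integral in $t$ reproduces $\widehat\psi$ evaluated at $\tau=(\overline\eta^2-\overline\xi^2)/\lambda$.

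The main obstacle is justifying the coordinate change and Fubini on the cone. Two issues arise there: the singular set $\lambda=0$, where $\tau$ blows up, and integrability, since $\tilde v_0\in\mathcal S$ while $\Phi\in\mathcal S$ is integrated against the weight $r^{N-1}$. I would handle this by cutting off $|\lambda|>\varepsilon$, performing the change of variables there (the map is a smooth diffeomorphism off $\lambda=0$), and letting $\varepsilon\to0$ by dominated convergence on both sides. On the $(\zeta,\sigma,r)$ side the integrand is bounded with bounded decay, as noted after~(\ref{ampl}). On the $(\lambda,\ldots)$ side, $|\tilde v_0\,\Psi|$ is integrable because $\Psi(t,\cdot)$ is Schwartz uniformly in $t$ and $\tilde v_0\in\mathcal S$. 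Once the identity holds for all $\psi$, $v$ equals the function from~(\ref{v-sol}), which by the first theorem lies in class~(\ref{class}) and solves~(\ref{eqn-psi}),~(\ref{cauchy}).
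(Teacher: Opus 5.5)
Your proposal is correct and follows the paper's proof. You pair via (\ref{u-Phi}), reparametrize the cone $\mathcal{C}$ by $(\lambda,\overline\xi,\overline\eta)$ so that the density $1/|\lambda|$ cancels the factor $|\xi_0+\eta_0|$ in (\ref{ampl}), Fourier-invert in $t$ to recover (\ref{v-sol}), and conclude from the existence part of the first theorem; the appeal to uniqueness is not actually needed.

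The differences are only technical. You obtain the density from the coordinate invariance of $\delta(\xi^2-\eta^2)$ written in the variables $(\lambda,\tau)$, rather than from the explicit branch-by-branch computation of Section~\ref{sec-int-int}. You also change variables on $a\Phi$ before applying Fubini in $t$, with a cutoff $|\lambda|>\varepsilon$ instead of the paper's $\chi(\varepsilon r)$. This ordering is legitimate because $a\Phi$ is integrable on the cone and $\tilde v_0\Psi\in L_1({\mathbb R}^{N+2})$.
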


Define the Fourier transform $F_t f$ of a function $f(t)$ as follows
\begin{equation*}
  (F_t f)(\rho) = \int_{\mathbb R} e^{-i t \rho} f(t) \, dt.
\end{equation*}
Denote by $\hat v(\rho, \lambda, \overline\xi, \overline\eta)$ the Fourier transform $F_t$ of the function
$\tilde v(t, \lambda, \overline\xi, \overline\eta)$ with respect to $t$.
Now show that relations~(\ref{uv}), (\ref{xy-st}) 
imply the following equality
\begin{equation}
  \hat u(\xi, \eta) = \hat v(\rho, \lambda, \overline\xi, \overline\eta),
  \label{F-F}
\end{equation}
where
\begin{equation}
  \xi = (\xi_0, \overline\xi) \in {\mathbb R}\times{\mathbb R}^d, \quad \eta = (\eta_0, \overline\eta) \in {\mathbb R}\times{\mathbb R}^n,
  \label{xieta}
\end{equation}
and the variables
$\xi_0$, $\eta_0$ and $\rho$, $\lambda$ are related as follows
\begin{equation}
  \lambda = \xi_0 + \eta_0, \quad \rho = \xi_0 - \eta_0.
  \label{xietamunu}
\end{equation}
We derive equality~(\ref{F-F}) for classical functions $u$, $v$, related by~(\ref{uv}),
which implies the same fact for distributions.
We have
\begin{equation*}
  x_0 \xi_0 - y_0\eta_0 = \frac{1}{2} \left(x_0 (\lambda+\rho) - y_0 (\lambda-\rho)\right) = \frac{1}{2} \left((x_0-y_0) \lambda + (x_0+y_0) \rho\right)
  = t\rho + s\lambda,
\end{equation*}
hence
\begin{equation*}
  x \xi - y \eta = t\rho + s\lambda + \overline x \overline\xi - \overline y\, \overline\eta.
\end{equation*}
It follows that
\begin{equation*}
  \hat u(\xi, \eta) 
  = 2 \int_{{\mathbb R}^{N+2}} e^{-i (t\rho + s\lambda + \overline x \overline\xi - \overline y\, \overline\eta)}
  v(t, s, \overline x, \overline y) \, dt ds d\overline x d\overline y
  = \hat v(\rho, \lambda, \overline\xi, \overline\eta).
\end{equation*}

Let $\psi(t, \lambda, \overline\xi, \overline\eta)$ be a test function,
and $\Psi(\rho, \lambda, \overline\xi, \overline\eta)$ be its inverse Fourier transform $F_t^{-1}$ with respect to $t$.
Set
\begin{equation*}
  \Phi(\xi, \eta) = \Psi(\rho, \lambda, \overline\xi, \overline\eta),
\end{equation*}
provided that the variables $\rho, \lambda$ and $\xi_0, \eta_0$ are related according to~(\ref{xietamunu}).
We have
\begin{equation*}
  \langle\tilde v, \psi\rangle = \langle\hat v, \Psi\rangle = 2 \langle\hat u, \Phi\rangle
  = \int_{\Sigma\times{\mathbb R}_+} r^{N-1} a(\zeta,\sigma,r) \Phi(r\zeta, r\sigma) \, d\zeta d\sigma dr.
\end{equation*}
The coefficient $2$ in the second equality is the Jacobian of the change of variables
$(\rho, \lambda, \overline\xi, \overline\eta)$ to $(\xi, \eta)$.
In the third equality, we used definition~(\ref{u-Phi}).

Now choose a function $\chi(r)$ from $C_0^\infty([0,\infty))$ being equal to unity for small $r$.
In view of fast decay of the function $\Phi$, the resulting expression in the preceding calculation equals
\begin{equation*}
  \lim_{\varepsilon\to 0} \int_{\Sigma\times{\mathbb R}_+} r^{N-1} \chi(\varepsilon r) a(\zeta,\sigma,r) \Phi(r\zeta, r\sigma) \, d\zeta d\sigma dr.
\end{equation*}
Using notations for $\zeta, \sigma$, analogous to~(\ref{xieta}), we rewrite the integral in this expression as follows
\begin{multline}
  \int_{\Sigma\times{\mathbb R}_+} r^{N-1} \chi(\varepsilon r) a(\zeta,\sigma,r) \Psi(r(\zeta_0-\sigma_0), r(\zeta_0+\sigma_0), r\overline\zeta, r\overline\sigma) d\zeta d\sigma dr
  \\= \frac{1}{2\pi}\int_{\Sigma\times{\mathbb R}_+} d\zeta d\sigma dr\, r^{N-1} \chi(\varepsilon r) a(\zeta,\sigma,r) \int_{\mathbb R} e^{i t r (\zeta_0-\sigma_0)} \psi(t, r(\zeta_0 + \sigma_0), r\overline\zeta, r\overline\sigma) dt
  \\= \frac{1}{2\pi}\int_{\mathbb R} dt \int_{\Sigma\times{\mathbb R}_+} r^{N-1} \chi(\varepsilon r) a(\zeta,\sigma,r) e^{i t r (\zeta_0-\sigma_0)} \psi(t, r(\zeta_0 + \sigma_0), r\overline\zeta, r\overline\sigma)\, d\zeta d\sigma dr
  \label{int-a-phi}
\end{multline}
(the existence of the resulting integral is provided by the factor $\chi$).
Next we convert the inner integral with respect to $\zeta$, $\sigma$, $r$ to the integral with respect to variables
\begin{equation*}
  \lambda = r(\zeta_0+\sigma_0), \quad \overline\xi = r\overline\zeta, \quad \overline\eta = r\overline\sigma,
\end{equation*}
with the use of the formula (derived in sec.~\ref{sec-int-int})
\begin{multline}
  \int_{\Sigma\times{\mathbb R}_+} W(r\zeta, r\sigma) r^{N-1} d\zeta d\sigma dr
  =\\= \int_{{\mathbb R}^d\times{\mathbb R}^n\times{\mathbb R}}
  W\left(\frac{\overline\eta^2 - \overline\xi^2}{2\lambda} + \frac{\lambda}{2}, \overline\xi, \frac{\overline\xi^2 - \overline\eta^2}{2\lambda} + \frac{\lambda}{2}, \overline\eta\right)
  \frac{d\overline\xi d\overline\eta d\lambda}{|\lambda|},
  \label{int-int}
\end{multline}
in which $W(\xi, \eta)$ is a bounded continuous function on the surface $\mathcal{C}$ with bounded support.
In our case, this function equals
\begin{multline*}
  W(\xi, \eta)
  = \chi(\varepsilon |\xi|)\, a(\xi/|\xi|, \eta/|\eta|, |\xi|)\, e^{i t (\xi_0-\eta_0)} \psi(t, \xi_0 + \eta_0, \overline\xi, \overline\eta)
  \\=  2\pi \chi(\varepsilon |\xi|)\, |\xi_0 + \eta_0|\, \tilde v_0(\xi_0 + \eta_0, \overline\xi, \overline\eta) e^{i t (\xi_0-\eta_0)} \psi(t, \xi_0 + \eta_0, \overline\xi, \overline\eta)
\end{multline*}
(we applied formula~(\ref{ampl}))
and has a bounded support due the factor $\chi$.
Thus the resulting expression in~(\ref{int-a-phi}) equals
\begin{equation*}
  \int_{\mathbb R} dt \int_{{\mathbb R}^d\times{\mathbb R}^n\times{\mathbb R}}
  \chi(\varepsilon r)\, \tilde v_0(\lambda, \overline\xi, \overline\eta)\, e^{i t (\overline\eta^2 - \overline\xi^2)/\lambda} \psi(t, \lambda, \overline\xi, \overline\eta)
  \,d\overline\xi d\overline\eta d\lambda,
\end{equation*}
where $r = \left(\xi_0^2 + \overline\xi^2\right)^{1/2}$, and the function $\xi_0 = \xi_0(\overline\xi, \overline\eta, \lambda)$ is defined by formula~(\ref{xi-eta-0}).
We have $\chi(\varepsilon r) \to 1$ as $\varepsilon\to 0$, providing that $\overline\xi \ne 0$.
Since the function $\psi$ is compactly supported,
applying Dominated Convergence Theorem gives
\begin{equation*}
  \langle\tilde v, \psi\rangle =
  \int_{\mathbb R} dt \int_{{\mathbb R}^d\times{\mathbb R}^n\times{\mathbb R}}
  \tilde v_0(\lambda, \overline\xi, \overline\eta)\, e^{i t (\overline\eta^2 - \overline\xi^2)/\lambda} \psi(t, \lambda, \overline\xi, \overline\eta)
  \,d\overline\xi d\overline\eta d\lambda,
\end{equation*}
which implies equality~(\ref{v-sol}).

\section{Derivation of equality~(\ref{int-int})}   \label{sec-int-int}
We will use the following equality for a sufficiently regular function $f(\zeta)$ on the sphere~$S^m$
\begin{equation*}
  \int_{S^m} f(\zeta)\, d\zeta = \sum_\pm \int_{B^d_1} f\left(\pm\sqrt{1-\overline\zeta^2}, \overline\zeta\right) \frac{d\overline\zeta}{\sqrt{1-\overline\zeta^2}}
\end{equation*}
(henceforth $B^m_r$ is the open ball of radius $r$ in ${\mathbb R}^m$ centered at the origin).

Convert the left hand side of equality~(\ref{int-int}) as follows
\begin{multline}
  \int_0^\infty dr\, r^{N-1} \int_\Sigma W(r\zeta, r\sigma) \,d\zeta d\sigma
  \\= \sum_{\alpha,\beta = \pm 1} \int_0^\infty dr \, r^{N-1} \int_{B^d_1} \frac{d\overline\zeta}{\sqrt{1-\overline\zeta^2}}
  \int_{B^n_1} W\left(\alpha r \sqrt{1-\overline\zeta^2}, r\overline\zeta, \beta r \sqrt{1-\overline\sigma^2}, r\overline\sigma\right) \frac{d\overline\sigma}{\sqrt{1-\overline\sigma^2}}
  \\= \sum_{\alpha,\beta = \pm 1} \int_0^\infty dr\, r \int_{B^d_r} \frac{d\overline\xi}{\sqrt{r^2-\overline\xi^2}}
  \int_{B^n_r} W\left(\alpha \sqrt{r^2-\overline\xi^2}, \overline\xi, \beta \sqrt{r^2-\overline\eta^2}, \overline\eta\right) \frac{d\overline\eta}{\sqrt{r^2-\overline\eta^2}}
  \\= \sum_{\alpha,\beta = \pm 1} \int_{{\mathbb R}^d} d\overline\xi 
  \int_{{\mathbb R}^n} d\overline\eta \int_{\max(|\overline\xi|, |\overline\eta|)}^\infty 
  \frac{W(\xi_0, \overline\xi, \eta_0, \overline\eta) r dr}{|\xi_0 \eta_0|}.   
  \label{long-int}
\end{multline}
In the last expression, we used notations
\begin{equation*}
  \xi_0 = \alpha \sqrt{r^2-\overline\xi^2}, \quad \eta_0 = \beta \sqrt{r^2-\overline\eta^2}.
\end{equation*}
Next we make the following change of variable in the inner integral with respect to $r$
\begin{equation*}
  \lambda(r) = \xi_0 + \eta_0.
\end{equation*}
It suffices to consider the case when $\overline\xi^2 \ne \overline\eta^2$.
Under this assumption, the change of variable produces a one-to-one map, since the derivative
\begin{equation*}
  \partial_r\lambda = \frac{\alpha r}{\sqrt{r^2 - \overline\xi^2}} + \frac{\beta r}{\sqrt{r^2 - \overline\eta^2}}
\end{equation*}
does not vanish. 
Besides, we have $\lambda \ne 0$, and the following equality holds true 
\begin{equation*}
  \partial_\lambda r = (\partial_r\lambda)^{-1} = \frac{\alpha\beta\sqrt{(r^2-\overline\xi^2)(r^2-\overline\eta^2)}}{r\lambda}
  = \frac{\xi_0 \eta_0}{r\lambda}.
\end{equation*}

It follows from equalities
\begin{equation*}
  \xi_0^2 + \overline\xi^2 = \eta_0^2 + \overline\eta^2, \quad \eta_0 = \lambda - \xi_0
\end{equation*}
that for all $\alpha,\beta=\pm 1$ we have
\begin{equation}
  \xi_0 = \frac{\overline\eta^2 - \overline\xi^2}{2\lambda} + \frac{\lambda}{2},
  \quad
  \eta_0 = \frac{\overline\xi^2 - \overline\eta^2}{2\lambda} + \frac{\lambda}{2}.
  \label{xi-eta-0}
\end{equation}

Consider the case $\overline\xi^2 > \overline\eta^2$, in which
$r$ takes values from $(|\overline\xi|, \infty)$.
Next we describe the behavior of the function $\lambda(r)$ for various values of $\alpha,\beta$:
\begin{equation*}
  \begin{array}{rll}
    \alpha = \beta = 1: & \lambda(|\overline\xi|) = \Lambda, & \lambda(+\infty) = +\infty,
    \\
    \alpha = -\beta = 1: &   \lambda(|\overline\xi|) = -\Lambda, & \lambda(+\infty) = 0,
    \\
    \alpha = -\beta = -1: &   \lambda(|\overline\xi|) = \Lambda, & \lambda(+\infty) = 0,
    \\
    \alpha = \beta = -1: &   \lambda(|\overline\xi|) = -\Lambda, & \lambda(+\infty) = -\infty.
  \end{array}
\end{equation*}
Here $\Lambda = \sqrt{\overline\xi^2-\overline\eta^2}$.
In each of these cases, the inner integral in~(\ref{long-int}) equals the integral
\begin{equation*}
  \int W(\xi_0, \overline\xi, \eta_0, \overline\eta) \frac{d\lambda}{|\lambda|}
\end{equation*}
taken over the corresponding interval.
Since these intervals cover the real line without overlapping,
the inner integral in~(\ref{long-int}), after summing with respect to $\alpha,\beta=\pm 1$, in view of relations~(\ref{xi-eta-0}), equals
\begin{equation*}
  \int_{\mathbb R} W\left(\frac{\overline\eta^2 - \overline\xi^2}{2\lambda} + \frac{\lambda}{2}, \overline\xi, \frac{\overline\xi^2 - \overline\eta^2}{2\lambda} + \frac{\lambda}{2}, \overline\eta\right)
  \frac{d\lambda}{|\lambda|}.
\end{equation*}

In the case $\overline\xi^2 < \overline\eta^2$, analogous calculation yields the same expression for the integral with respect to $r$.

Now we may conclude calculation~(\ref{long-int}) by equality~(\ref{int-int}).
Absolute convergence of the integral on the right hand side of the latter follows from the derivation of this relation.
It can also be established directly.
Indeed, for large $\overline\xi$, $\overline\eta$, the integrand vanishes, since $W$ is a compactly supported function.
This is true also for large $\lambda$, since the sum of arguments $\xi_0$ and $\eta_0$ of the function $W$ equals $\lambda$.
Next, for small $\lambda$, the integrand is non-zero only if
\begin{equation*}
  |\overline\eta^2 - \overline\xi^2| \leqslant C |\lambda|
\end{equation*}
($C$ depends on the size of the support of the function $W$),
since otherwise, the arguments $\xi_0$, $\eta_0$ of the function $W$ under the integral sign would be sufficiently large.
The last inequality implies that
\begin{equation*}
  \left||\overline\eta| - |\overline\xi|\right| \leqslant \sqrt{C |\lambda|}.
\end{equation*}
Hence, for small $\lambda$, the measure of the set of points $(\overline\xi, \overline\eta)$, where the integrand is non-zero,
does not exceed $C \sqrt{|\lambda|}$, which implies the absolute convergence of the integral.

\end{document}